\documentclass[a4paper,12pt,twoside, reqno]{amsart}
\usepackage{amssymb,amsthm,amsmath,bm,txfonts,graphicx,mathptmx}
\usepackage[all]{xy}

\makeatletter
  
  \@addtoreset{equation}{section}
\makeatother

\newtheorem{thm}{Theorem}[section]
\newtheorem{dfn}[thm]{Definition}

\newtheorem{prop}[thm]{Proposition}
\newtheorem{ex}[thm]{Example}
\newtheorem{cor}[thm]{Corollary}

\theoremstyle{remark}
\newtheorem{rem}[thm]{Remark}

\newcommand{\calN}{\mathcal{N}}

\newcommand{\fkm}{\mathfrak{m}}

\newcommand{\fkS}{\mathfrak{S}}

\newcommand{\NN}{\mathbb{N}}
\newcommand{\ZZ}{\mathbb{Z}}

\newcommand{\CC}{\mathbb{C}}
\newcommand{\FF}{\mathbb{F}}

\newcommand{\Spec}{\operatorname{Spec}}

\newcommand{\depth}{\operatorname{depth}}

\newcommand{\rank}{\operatorname{rank}}

\newcommand{\Hom}{\operatorname{Hom}}
\newcommand{\trace}{\operatorname{trace}}

\newcommand{\GL}{\operatorname{GL}}
\newcommand{\SL}{\operatorname{SL}}

\newcommand{\Sym}{\operatorname{Sym}}
\newcommand{\charac}{\operatorname{char}}

\newcommand{\Ref}{\operatorname{Ref}}

\setlength{\textwidth}{16cm} 
\setlength{\textheight}{23cm}
\setlength{\oddsidemargin}{0cm} 
\setlength{\evensidemargin}{0cm} 
\setlength{\topmargin}{0cm} 
\setlength{\headheight}{0cm} 
\setlength{\headsep}{1cm} 
\setlength{\footskip}{0.5cm}

\begin{document}

\title[Generalized $F$-signature of invariant subrings]{Generalized $F$-signature of invariant subrings}
\author[Mitsuyasu Hashimoto \and YUSUKE NAKAJIMA]{Mitsuyasu Hashimoto \and YUSUKE NAKAJIMA}
\date{}

\subjclass[2010]{Primary 13A35, 13A50.}
\keywords{$F$-signature, finite $F$-representation type, invariant subrings}

\address[Mitsuyasu Hashimoto]{Department of Mathematics, Okayama University, 
Okayama 700-8530, Japan} 
\email{mh@okayama-u.ac.jp}
\address[Yusuke Nakajima]{Graduate School Of Mathematics, Nagoya University, Chikusa-Ku, Nagoya,
 464-8602, Japan} 
\email{m06022z@math.nagoya-u.ac.jp}
\maketitle

\begin{abstract}
It is known that a certain invariant subring $R$ has finite $F$-representation type. 
Thus, we can write the $R$-module ${}^eR$ as a finite direct sum of finitely many $R$-modules. 
In such a decomposition of ${}^eR$, we pay attention to the multiplicity of each direct summand. 
For the multiplicity of free direct summand, there is the notion of $F$-signature defined by C.~Huneke and G.~Leuschke 
and it characterizes some singularities. 
In this paper, we extend this notion to non free direct summands and determine their explicit values. 
\end{abstract}


\section{Introduction}

Throughout this paper, we suppose that $k$ is an algebraically closed
field of prime characteristic $p>0$, and $V$ is a $d$-dimensional
$k$-vector space.
Let $G\subset\GL(V)$ be a finite subgroup such that the order of
$G$ is not divisible by $p$, and $G$ contains no pseudo-reflections.
Let $S$ be a symmetric algebra of $V$. We denote the invariant subring of $S$ under the action of $G$ by $R\coloneqq S^G$. 
Sometimes we denote $p^e$ by $q$. 
Since $\operatorname{char}R=p>0$, we can define the Frobenius map $F:R\rightarrow R\;(r\mapsto r^p)$ and also 
define the $e$-times iterated Frobenius map $F^e:R\rightarrow R\;(r\mapsto r^{p^e})$ for $e\in\mathbb{N}$. 
For any $R$-module $M$, we denote the module $M$ with its $R$-module structure pulled back via $F^e$ by ${}^eM$.
That is, ${}^eM$ is just $M$ as an abelian group, and its $R$-module structure is given by $r\cdot m\coloneqq F^e(r)m=r^{p^e}m$ for all $r\in R,\;m\in M$. 

In our assumption, it is known that the invariant subring $R$ has finite $F$-representation type (or FFRT for short). 
The notion of FFRT is defined by K.~Smith and M.~Van den Bergh \cite{SVdB}.

\begin{dfn}
We say that $R$ has finite $F$-representation type by $\calN$ if 
there is a finite set $\calN$ of isomorphism classes of finitely generated $R$-modules,
such that for any $e\in\NN$, the $R$-module ${}^eR$ is isomorphic to a finite direct sum of elements of $\calN$.
\end{dfn}

More explicitly, finitely many finitely generated $R$-modules which form such a finite set $\calN$ are described as follows.  

\begin{prop} $($\cite[Proposition~3.2.1]{SVdB}$)$ 
Let $V_0=k,V_1,\cdots,V_n$ be a complete set of non-isomorphic irreducible representations of $G$ and we set $M_i\coloneqq(S\otimes_kV_i)^G\;\;(i=0,1,\cdots,n)$. 
Then $R$ has FFRT by $\{M_0\cong R,M_1,\cdots,M_n\}$.
\end{prop}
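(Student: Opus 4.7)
The plan is to decompose ${}^eR$ explicitly as a direct sum of copies of the $M_i$. Fix a basis of $V$, write $S=k[x_1,\dots,x_d]$, put $q:=p^e$, and let $S^{(q)}:=k[x_1^q,\dots,x_d^q]=F^e(S)$. By the Freshman's dream, $g(x_i^q)=g(x_i)^q\in(x_1^q,\dots,x_d^q)$ for every $g\in G$, so this ideal of $S$ is $G$-stable and the quotient $S/(x_1^q,\dots,x_d^q)$ is a finite-dimensional $G$-module of dimension $q^d$.

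The first step is to construct a $G$-stable $k$-subspace $W\subset S$ of dimension $q^d$ for which the multiplication map
$$\mu\colon S^{(q)}\otimes_k W\longrightarrow S,\qquad a\otimes w\mapsto aw,$$
is an isomorphism of $G$-equivariant $S^{(q)}$-modules. Since $|G|$ is invertible in $k$, averaging any $k$-linear section of the surjection $S\twoheadrightarrow S/(x_1^q,\dots,x_d^q)$ over $G$ (Maschke) yields a $G$-equivariant section, whose image is the desired $W$. That $\mu$ is an isomorphism then follows from graded Nakayama together with a rank comparison, since both $S^{(q)}\otimes_k W$ and $S$ are free $S^{(q)}$-modules of rank $q^d$.

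The second step is to decompose $W$ into $G$-isotypic components, $W\cong\bigoplus_{i=0}^{n} V_i^{\oplus c_{i,e}}$ for suitable non-negative integers $c_{i,e}$, and take $G$-invariants in $\mu$. This yields
$$R\;\cong\;\bigoplus_{i=0}^{n}\bigl((S^{(q)}\otimes_k V_i)^G\bigr)^{\oplus c_{i,e}}$$
as modules over $R^{(q)}:=(S^{(q)})^G=F^e(R)$.

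The final step is to pull this identity back along the ring isomorphism $F^e\colon R\xrightarrow{\sim} R^{(q)}$, which is bijective because $R$ is a domain. On the left, the pulled-back action on $R$ is $r\cdot s=r^q s$, exactly the definition of ${}^eR$. On the right, the $G$-equivariant Frobenius comparison map
$$\Phi\colon S\otimes_k V_i\longrightarrow S^{(q)}\otimes_k V_i^{(q)},\qquad s\otimes v\mapsto s^q\otimes v,$$
is well-defined over $\otimes_k$ because $k$ is perfect, and it identifies each pulled-back $R$-module $(S^{(q)}\otimes V_i)^G$ with $M_{j(i)}$, where $j(i)$ is determined by the Frobenius twist $V_i^{(q)}\cong V_{\tau(i)}$. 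Since this twist permutes the set $\{V_0,\dots,V_n\}$, and hence $\{M_0,\dots,M_n\}$, reindexing gives
$${}^eR\;\cong\;\bigoplus_{i=0}^{n} M_i^{\oplus c'_{i,e}}$$
for suitable $c'_{i,e}\ge 0$, which is the required FFRT decomposition by $\{M_0,\dots,M_n\}$. The main technical hurdle is producing the $G$-stable complement $W$ in the first step; once that is in hand, the rest is formal manipulation of isotypic decompositions and the Frobenius identification.
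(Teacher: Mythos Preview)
Your argument is correct and follows the same core strategy as the proof in \cite{SVdB} that the paper cites: use Maschke to produce a $G$-equivariant splitting of the Frobenius quotient, then freeness and (graded) Nakayama to obtain a $(G,S)$-module tensor decomposition, and finally take $G$-invariants. One small point: for graded Nakayama to apply cleanly you should choose $W$ graded, which is automatic if you apply Maschke degree by degree.

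The only real difference is packaging. You view $S$ as a module over $S^{(q)}$, take invariants to land in $R^{(q)}$-modules, and then pull back along $F^e\colon R\xrightarrow{\sim}R^{(q)}$, which forces you to chase a Frobenius twist on the $V_i$ and reindex by the permutation $V_i\mapsto V_i^{(q)}$ at the end. The paper (following \cite{SVdB}) instead works directly with ${}^eS$ as a $(G,S)$-module and writes ${}^eS\cong S\otimes_k({}^eS/\fkm\,{}^eS)$; decomposing the finite $kG$-module ${}^eS/\fkm\,{}^eS$ into irreducibles and taking $G$-invariants then yields ${}^eR\cong\bigoplus_i M_i^{\oplus c_{i,e}}$ immediately, with no twist bookkeeping. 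Your detour is harmless since the twist is a permutation, but the direct formulation is cleaner.
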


From this proposition, we can decompose ${}^eR$ as follows
\[
{}^eR\cong R^{\oplus c_{0,e}}\oplus M_1^{\oplus c_{1,e}}\oplus\cdots\oplus M_n^{\oplus c_{n,e}}.
\]

Now, we want to investigate the multiplicities $c_{i,e}$. For the multiplicity of the free direct summand, that is, for the multiplicity $c_{0,e}$, 
there is the notion of $F$-signature defined by C.~Huneke and G.~Leuschke.

\begin{dfn} $($\cite[Definition~9]{HL}$)$
The $F$-signature of $R$ is $s(R)\coloneqq\displaystyle\lim_{e\rightarrow\infty}\frac{c_{0,e}}{p^{de}}$, if it exists.
\end{dfn}

Note that K.~Tucker showed its existence under more general settings \cite[Theorem~4.9]{Tuc} $($see also Proposition\,\ref{unq_FFRT}$)$. 
And it is known that this numerical invariant characterizes some singularities. 
For example, $s(R)=1$ if and only if $R$ is regular \cite[Corollary~16]{HL} $($see also \cite[Theorem~3.1]{Yao2}$)$, 
and $s(R)>0$ if and only if $R$ is strongly $F$-regular \cite[Theorem~0.2]{AL}.
In our situation, the $F$-signature of the invariant subring $R$ is determined as follows and it implies that $R$ is strongly $F$-regular.

\begin{thm} $($\cite[Theorem~4.2]{WY}$)$ 
\label{Fsig_inv}
The $F$-signature of the invariant subring $R$ is 
\[
s(R)=\displaystyle\frac{1}{|G|}.
\]
\end{thm}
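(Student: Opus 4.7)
The plan is to reduce the computation of the free multiplicity $c_{0,e}$ of $R$ in ${}^eR$ to a character sum over $G$, by exploiting the projectivity of ${}^eS$ as a module over the skew group algebra $S\ast G$.

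Since $k$ is algebraically closed (hence perfect) and $S$ is a polynomial ring, $S$ is regular and ${}^eS$ is a free $S$-module of rank $q^d$. Because $|G|$ is invertible in $k$, Maschke's theorem implies that a finitely generated $S\ast G$-module is projective if and only if it is free as an $S$-module, so in particular ${}^eS$ is a projective $S\ast G$-module. The indecomposable projective $S\ast G$-modules are $P_i := S\otimes_k V_i$, and the invariants functor $(-)^G$ (exact, since $|G|$ is invertible) sends $P_i$ to $M_i = (S\otimes_k V_i)^G$. Writing a Krull--Schmidt decomposition ${}^eS\cong \bigoplus_i P_i^{\oplus m_i}$ as $S\ast G$-modules and applying $(-)^G$ gives
\[
{}^eR = ({}^eS)^G \cong \bigoplus_i M_i^{\oplus m_i},
\]
so in particular $c_{0,e} = m_0$.

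To compute $m_0$, reduce modulo the maximal homogeneous ideal $\fm\subset S$. Since $\fm\cdot {}^eS = \fm^{[q]}$ (as a subset of $S$), one has ${}^eS\otimes_S k\cong S/\fm^{[q]}$; reducing the decomposition above and using $P_i\otimes_S k = V_i$ gives an isomorphism of $G$-representations $S/\fm^{[q]}\cong \bigoplus_i V_i^{\oplus m_i}$. Hence $m_0 = \dim_k (S/\fm^{[q]})^G$, and character orthogonality yields
\[
m_0 = \frac{1}{|G|}\sum_{g\in G}\trace\!\left(g,\, S/\fm^{[q]}\right).
\]
For each $g\in G$, diagonalize the action of $g$ on $V$ with eigenvalues $\lambda_1(g),\ldots,\lambda_d(g)$ (possible because $g$ has order coprime to $p$). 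In this basis $x_1,\ldots,x_d$ of $V\subset S$, the monomials $\{x^\alpha : 0\le \alpha_i < q\}$ form a basis of $S/\fm^{[q]}$ on which $g$ acts diagonally with eigenvalues $\prod_i \lambda_i(g)^{\alpha_i}$, giving
\[
\trace\!\left(g,\, S/\fm^{[q]}\right) = \prod_{i=1}^{d} \sum_{\alpha_i=0}^{q-1} \lambda_i(g)^{\alpha_i}.
\]
Each factor with $\lambda_i(g)=1$ contributes $q$; each factor with $\lambda_i(g)\ne 1$ contributes the bounded geometric sum $(1-\lambda_i(g)^q)/(1-\lambda_i(g))$. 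Thus $\trace(g,\, S/\fm^{[q]}) = O(q^{\dim V^g})$, while $g=1$ alone contributes $q^d$.

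The no-pseudo-reflection hypothesis is exactly the statement that $\dim V^g\le d-2$ for every $g\ne 1$, so the total contribution of nonidentity elements to the character sum is $O(q^{d-2})$. This yields $m_0 = q^d/|G| + O(q^{d-2})$, hence $s(R) = \lim_{e\to\infty} c_{0,e}/q^d = 1/|G|$. I expect the most delicate part to be the first step, namely verifying that ${}^eS$ is projective over $S\ast G$ and that $(-)^G$ carries the Krull--Schmidt decomposition of ${}^eS$ to that of ${}^eR$; both rest on $|G|$ being invertible in $k$. With those in place, the character computation is essentially mechanical, and the no-pseudo-reflection hypothesis enters precisely as the error-term bound.
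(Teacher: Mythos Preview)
Your strategy coincides with the paper's own (this statement is the $i=0$ case of the main Theorem~\ref{main}): pass from the $R$-decomposition of ${}^eR$ to the $kG$-decomposition of ${}^eS/\fkm\,{}^eS\cong S/\fkm^{[q]}$, then count the trivial summand by a character sum and estimate term by term. There is, however, a genuine gap at the character step. You write
\[
m_0=\frac{1}{|G|}\sum_{g\in G}\trace\!\left(g,\;S/\fkm^{[q]}\right)
\]
and then bound the summands as if they were complex numbers (``bounded geometric sum'', $O(q^{\dim V^g})$, the term $g=1$ contributing $q^d$). But your traces lie in $k$, where $q=p^e=0$ and ``bounded'' is meaningless; the displayed identity is only an equality in $k$, i.e.\ it computes $m_0\bmod p$, which tells you nothing about $m_0/q^d$. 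The paper repairs exactly this point by replacing $k$-valued traces with \emph{Brauer characters}: fix an isomorphism $\Phi:\mu_{|G|}(k)\to\mu_{|G|}(\CC)$, set $\chi_W(g)=\sum_i\Phi(\omega_i)\in\CC$ where $\omega_i$ are the eigenvalues of $g$ on $W$, and prove the \emph{integer} identity $\dim_kW^G=\tfrac{1}{|G|}\sum_g\chi_W(g)$ (Proposition~\ref{character prop}(5), via a Witt-vector lift). Your geometric-sum estimate then goes through verbatim in $\CC$, where it is legitimate.

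A smaller point: you invoke the no-pseudo-reflection hypothesis only to sharpen the error to $O(q^{d-2})$, but that is unnecessary---already $g\neq 1$ forces some eigenvalue $\neq 1$, hence $O(q^{d-1})$, which suffices for the limit (and this is what the paper uses). The hypothesis is actually needed earlier, at the line ``so in particular $c_{0,e}=m_0$'': this requires that no $M_i$ with $i>0$ is free, which is false when $G$ contains a pseudo-reflection (then some $M_i\cong R$, so $c_{0,e}>m_0$; indeed $s(R)=1\neq 1/|G|$ when $R$ is regular). The paper secures this step via the equivalence $\Ref(G,S)\simeq\Ref(R)$ of Theorem~\ref{almost.p.f.b}, which holds precisely in the absence of pseudo-reflections.
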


\begin{rem}
In \cite{WY}, this theorem is proved in terms of minimal relative Hilbert-Kunz multiplicity. 
And Y.~Yao showed that it coincides with the $F$-signature \cite[Remark~2.3 (4)]{Yao2}. 
\end{rem}

\medskip

Now, we extend this notion to other direct summands. Namely, we investigate the multiplicities $c_{i,e}\;(i=1,\cdots,n)$ 
and determine the limit $\displaystyle\lim_{e\rightarrow\infty}\frac{c_{i,e}}{p^{de}}$. 
In order to determine this limit, we have to care about the next two problems first.

\begin{itemize}
  \item For each $e\in\NN$, are the multiplicities $c_{i,e}$ determined uniquely?
  \item Does the limit $\displaystyle\lim_{e\rightarrow\infty}\frac{c_{i,e}}{p^{de}}$ exist?
\end{itemize}

In Section\,\ref{u.o.d}, we will show the uniqueness of the multiplicities. 
In Section\,\ref{GF-sig}, we will show the existence of the limit and determine the limit (see Theorem~\ref{main}).

\section{Uniqueness of decomposition}
\label{u.o.d}

In this section, we show the uniqueness of the multiplicities. Firstly, we introduce the notion of Frobenius twist $($e.g. \cite{Jan}$)$.

\begin{dfn}
For $k$-vector space $V$ and $e\in\ZZ$, we define $k$-vector space ${}^eV$ as follows
 \begin{itemize}
  \item ${}^eV$ is the same as $V$ as an additive group;
  \item the action of $\alpha\in k$ on ${}^eV$ is $\alpha\cdot v=\alpha^{p^e}v$.
 \end{itemize} 
An element $v\in V$, viewed as an element of ${}^eV$, is sometimes denoted by ${}^ev$. Thus $\alpha\cdot {}^ev={}^e(\alpha^{p^e}v)$.
By the composition 
$G\hookrightarrow\GL(V)\overset{\phi}{\rightarrow}\GL({}^eV)$, 
${}^eV$ is also a representation of $G$, where $\phi$ is given by $\phi(g)({}^ev)={}^e(gv)$ for $g\in G$ and $v\in V$.
We call this representation the Frobenius twist of $V$. Sometimes we denote this representation by $V^{(-e)}$.
\end{dfn}

Let $v_1,\cdots,v_d$ be a basis of $V$. For this basis, we suppose that a representation of $G$ is defined by
\[
g\cdot v_j=\sum^d_{i=1}f_{ij}(g)v_i \quad(g\in G,\;f_{ij}:G\rightarrow k).
\]
Namely, a matrix representation of $V$ is described by $\big(f_{ij}(g)\big)$. 
Since $k$ is an algebraically closed field, the basis $v_1,\cdots,v_d$ also forms a basis of ${}^eV$,
and the action of $G$ on ${}^eV$ is described as follows
\[
g\cdot {}^ev_j={}^e(g\cdot v_j)={}^e(\sum^d_{i=1}f_{ij}(g)v_i)=\sum^d_{i=1}f_{ij}(g)^{p^{-e}}({}^ev_i).
\]
From this observation, a matrix representation of the Frobenius twist ${}^eV$ is described by $\big((f_{ij}(g))^{p^{-e}}\big)$, 
that is, each component of the matrix representation of ${}^eV$ is the ${p^{-e}}$-th power of the original one. 

In order to show the uniqueness of the multiplicities, we prove the following. 

\begin{prop}
\label{unqness}
For $e\ge 1,\;c_{0,e},\cdots,c_{n,e}\ge0$, the following decompositions are equivalent
\begin{enumerate}
 \item[(1)] ${}^eR\cong M_0^{\oplus c_{0,e}}\oplus M_1^{\oplus c_{1,e}}\oplus\cdots\oplus M_n^{\oplus c_{n,e}}$ \quad as $R$-modules;
 \item[(2)] ${}^eS\cong(S\otimes_kV_0)^{\oplus c_{0,e}}\oplus(S\otimes_kV_1)^{\oplus c_{1,e}}\oplus\cdots\oplus(S\otimes_kV_n)^{\oplus c_{n,e}}$ \quad 
as $(G,S)$-modules;
 \item[(3)] ${}^eS/\fkm{}^eS\cong V_0^{\oplus c_{0,e}}\oplus V_1^{\oplus c_{1,e}}\oplus\cdots\oplus V_n^{\oplus c_{n,e}}$ \quad as $G$-modules;
 \item[(4)] there exist $\alpha_{ij}\in\frac{1}{q}\ZZ_{\ge0}$ such that ${}^eS\cong\displaystyle\bigoplus^n_{i=0}\bigoplus^{c_{i,e}}_{j=1}(S\otimes_kV_i)(-\alpha_{ij})$ \quad \\ 
       as $\frac{1}{q}\ZZ$-graded $(G,S)$-modules;
 \item[(5)] there exist $\alpha_{ij}\in\frac{1}{q}\ZZ_{\ge0}$ such that ${}^eR\cong\displaystyle\bigoplus^n_{i=0}\bigoplus^{c_{i,e}}_{j=1}M_i(-\alpha_{ij})$ \quad \\ 
       as $\frac{1}{q}\ZZ$-graded $R$-modules.
\end{enumerate}
\end{prop}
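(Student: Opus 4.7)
The proof pivots on the graded $G$-module $W_e := {}^eS/\mathfrak{m}\,{}^eS$, where $\mathfrak{m}$ denotes the irrelevant ideal of $S$. The cornerstone is a \emph{master lemma}: since $S = k[v_1,\dots,v_d]$ is a polynomial ring, ${}^eS$ is graded-free over $S$ with basis the Frobenius-twisted monomials ${}^e(v_1^{a_1}\cdots v_d^{a_d})$, $0 \le a_i < q$; more generally, every graded $(G,S)$-module $M$ that is graded-free over $S$ is isomorphic to $S \otimes_k (M/\mathfrak{m}M)$ as a graded $(G,S)$-module. To prove this, one uses the semisimplicity of $G$-representations (valid because $|G|$ is invertible in $k$) to select a graded $G$-equivariant splitting of $M \twoheadrightarrow M/\mathfrak{m}M$; the induced $S$-linear, $G$-equivariant map $S \otimes_k (M/\mathfrak{m}M) \to M$ is surjective by graded Nakayama and bijective because both sides are graded-free $S$-modules of the same total rank.

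Given the master lemma, the block $(2) \Leftrightarrow (3) \Leftrightarrow (4)$ is straightforward. The implication $(4) \Rightarrow (2)$ forgets the grading; $(2) \Rightarrow (3)$ is obtained by tensoring with $k = S/\mathfrak{m}$ over $S$, using $(S \otimes_k V_i) \otimes_S k \cong V_i$; and $(3) \Rightarrow (4)$ proceeds by decomposing the graded $G$-module $W_e$ into irreducible graded summands $V_i(-\alpha_{ij})$ with $\alpha_{ij} \in \frac{1}{q}\ZZ_{\ge 0}$ (possible degree-by-degree by semisimplicity), reading the total $V_i$-multiplicity as $c_{i,e}$, and invoking the master lemma. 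The implications $(2) \Rightarrow (1)$ and $(4) \Rightarrow (5)$ then follow by applying the exact functor $(-)^G$ (exact because $|G|$ is invertible), using $(S \otimes_k V_i)^G = M_i$ by definition.

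The substantive step is $(1) \Rightarrow (2)$, and analogously $(5) \Rightarrow (4)$. My plan is indirect: the master lemma yields a \emph{canonical} decomposition ${}^eR = ({}^eS)^G \cong \bigoplus_i M_i^{\oplus m_i}$, where $m_i$ is the $V_i$-multiplicity in $W_e$. Combining this with the hypothesis (1) gives $\bigoplus_i M_i^{\oplus m_i} \cong \bigoplus_i M_i^{\oplus c_{i,e}}$ as $R$-modules; invoking Krull--Schmidt forces $c_{i,e} = m_i$ for every $i$, from which (2) follows by substituting back into the canonical decomposition of ${}^eS$. The graded analogue handles $(5) \Rightarrow (4)$ via the graded Krull--Schmidt theorem.

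The hard part is precisely justifying this final Krull--Schmidt step, since (1) is an ungraded isomorphism. One verifies that the $M_i$ are pairwise non-isomorphic indecomposable MCM $R$-modules with local endomorphism rings: the formula $\End_R(M_i) \cong (S \otimes_k \End_k(V_i))^G$, combined with $(\End_k(V_i))^G = k$ (Schur's lemma, since $V_i$ is irreducible), shows the degree-$0$ component of $\End_R(M_i)$ is just $k$. Passing to the completion of $R$ at its graded maximal ideal (which preserves indecomposability and non-isomorphism of the $M_i$) places us in a setting where the classical Krull--Schmidt theorem for complete local Noetherian rings applies to finite direct sums of the $M_i$, closing the argument.
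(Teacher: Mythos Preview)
Your master lemma and the block $(2)\Leftrightarrow(3)\Leftrightarrow(4)$, together with the easy implications $(2)\Rightarrow(1)$ and $(4)\Rightarrow(5)$ via $(-)^G$, coincide with the paper's argument (the paper cites \cite[Proposition~3.2.1]{SVdB} for the master lemma). The divergence is in $(1)\Rightarrow(2)$ and $(5)\Rightarrow(4)$: the paper proves a categorical equivalence $\Ref(G,S)\xrightarrow{\ (-)^G\ }\Ref(R)$ (its Theorem~\ref{almost.p.f.b}), from which $(1)\Leftrightarrow(2)$ and $(4)\Leftrightarrow(5)$ are immediate, whereas you argue via Krull--Schmidt.

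Your Krull--Schmidt route has a genuine gap. To run it you need the $M_i$ to be indecomposable and pairwise non-isomorphic, and for this you invoke the formula $\End_R(M_i)\cong(S\otimes_k\End_k(V_i))^G$. But that formula is exactly the full-faithfulness of $(-)^G:\Ref(G,S)\to\Ref(R)$ on the objects $S\otimes_kV_i$; it is not a routine verification, and you give no argument for it. This is precisely where the hypothesis that $G$ contain no pseudo-reflections must enter: the paper's counter-example ($G=\fkS_2$ acting by permutation on $k[x,y]$) has $M_0\cong M_1\cong R$, so the $M_i$ can fail to be non-isomorphic, and the asserted $\End$ formula fails there too. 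In the paper the no-pseudo-reflection hypothesis is discharged inside the proof of Theorem~\ref{almost.p.f.b} via a result of Iyama--Takahashi/Leuschke--Wiegand (the Auslander-type isomorphism $S*G\cong\End_R(S)$). Your argument relocates this same difficulty into an unproved assertion, so it is not in fact more elementary; once you supply a proof of the $\End$ formula, you will have reproduced the substance of the equivalence theorem anyway. Note also that in the paper the indecomposability and pairwise non-isomorphism of the $M_i$ are stated as a \emph{corollary} of the proposition you are trying to prove, not as an input to it.
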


\begin{rem}
A similar correspondence holds for more general situation up to 
the action of the $e$-th Frobenius kernel of a group scheme \cite{Has}. 
For the case of a finite group $G$, the $e$-th Frobenius kernel of $G$ is trivial. Thus, we may ignore it in our context. 
\end{rem}

To prove this proposition, the next theorem plays the central role. 
This is proved in \cite{Has} for more general settings using some geometric settings.
For convenience of the reader, we give a short and simple proof using a result of Iyama and Takahashi
\cite{IT} or  Leuschke and Wiegand \cite{LW}.
The two-dimensional case is very well-known as a theorem of Auslander \cite{Auslander}.
See also \cite[Chapter~10]{Yo}.

\begin{thm}
\label{almost.p.f.b}
If $G$ contains no pseudo-reflections, then the functor $\Ref(G,S)\rightarrow\Ref(R)$ 
$(M\mapsto M^G)$ is an equivalence, where $\Ref(G,S)$ is the category of reflexive $(G,S)$-modules and 
$\Ref(R)$ is the category of reflexive $R$-modules. The quasi-inverse is $N\mapsto (S\otimes_RN)^{**}$.

The same functors give an equivalence ${}^*\Ref(G,S)\rightarrow{}^*\Ref(R)$, where ${}^*\Ref(G,S)$ is 
the category of $\ZZ[1/p]$-graded reflexive $(G,S)$-modules and ${}^*\Ref(R)$ is the category of $\ZZ[1/p]$-graded reflexive $R$-modules.
\end{thm}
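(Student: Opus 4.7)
My plan is to reduce Theorem \ref{almost.p.f.b} to a standard Auslander-type reflexive equivalence. The foundational inputs are that $S$ is a finitely generated reflexive (in fact maximal Cohen--Macaulay) $R$-module containing $R$ as a direct summand---the splitting being furnished by the Reynolds operator $\rho = |G|^{-1}\sum_{g\in G}g$, available because $|G|$ is invertible in $k$---together with the classical ring homomorphism
\[
\Phi : S \# G \longrightarrow \operatorname{End}_R(S), \qquad s \otimes g \longmapsto \bigl(x \mapsto s\cdot g(x)\bigr).
\]

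First I would prove that $\Phi$ is an isomorphism. Injectivity is easy and follows from the faithfulness of the $G$-action on the fraction field of $S$. Surjectivity is the delicate point, and I expect it to be the main obstacle: both sides of $\Phi$ are reflexive as $R$-modules, so it suffices to check the map becomes an isomorphism after localizing at every height-one prime of $R$, and at such primes the hypothesis that $G$ contains no pseudo-reflections guarantees that $R \subset S$ is unramified in codimension one. The isomorphism then reduces to the well-known case of \'{e}tale Galois extensions.

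Next, with $\operatorname{End}_R(S) \cong S \# G$ in hand, I would invoke the Iyama--Takahashi result from \cite{IT} (or the variant of Leuschke--Wiegand \cite{LW}): for a reflexive module $M$ over a normal domain $R$ containing $R$ as a direct summand, the functor $\Hom_R(M,-) : \Ref(R) \to \Ref(\operatorname{End}_R(M))$ is an equivalence of categories. Applied to $M = S$, this yields $\Ref(R) \simeq \Ref(S \# G)$, and $\Ref(S \# G)$ coincides with $\Ref(G,S)$ because reflexivity may be checked on the underlying $S$-module, the $G$-action being a compatible structure imposed on top.

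Finally, I would identify the functors in this abstract equivalence with those in the theorem statement. The quasi-inverse of $\Hom_R(S,-)$ is $\Hom_{S\#G}(S,-)$, and for a $(G,S)$-module $M$ one has $\Hom_{S\#G}(S,M) = M^G$; this identifies the forward functor as $M \mapsto M^G$. For the backward direction, I would use that $\Hom_R(S,R) \cong S$ as $(G,S)$-modules (a consequence of unramifiedness in codimension one and the perfectness of the trace pairing), which together with the reflexivity of $S$ gives $\Hom_R(S,N) \cong (S\otimes_R N)^{**}$ for reflexive $N$, matching the stated quasi-inverse. The graded version then follows by inspection: every functor and isomorphism above preserves the natural $\ZZ$-grading on $R$ and $S$, and extending scalars to the grading group $\ZZ[1/p]$ (needed for the later application to ${}^eR$ with its $\frac{1}{p^e}\ZZ$-grading) introduces no additional complication.
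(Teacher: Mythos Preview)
Your approach is correct and closely related to the paper's, but the two proofs are organized inversely around the same key ingredient, the Auslander isomorphism $\Phi: S*G \xrightarrow{\sim} \operatorname{End}_R(S)$.

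The paper does \emph{not} prove $\Phi$ is an isomorphism; it cites precisely this fact from \cite[Theorem~4.2]{IT} or \cite[Theorem~5.12]{LW}. With that granted, the paper works bare-handed: it writes down the unit $u:N\to((S\otimes_RN)^{**})^G$ and the counit $\varepsilon:(S\otimes_RM^G)^{**}\to M$ and checks each is an isomorphism directly—$u$ by localizing at primes of height $\le 1$, and $\varepsilon$ by reducing via the five lemma to the single module $M=S*G$, where the statement becomes exactly $\Phi$ being an isomorphism. You, by contrast, propose to \emph{prove} $\Phi$ is an isomorphism yourself (reflexivity plus the \'etale-in-codimension-one argument, which is fine), and then to invoke from \cite{IT}/\cite{LW} a packaged reflexive Morita equivalence $\Hom_R(S,-):\Ref(R)\to\Ref(\operatorname{End}_R(S))$.

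Both routes arrive at the same place; yours is more conceptual, the paper's more self-contained. One caution: the statement you attribute to \cite{IT}/\cite{LW}—that $\Hom_R(M,-)$ is a reflexive equivalence whenever $M$ is reflexive with $R\in\operatorname{add}M$—is true (it is the standard reflexive Morita theorem, cf.\ Iyama--Wemyss), but you should verify that those particular references state it in that generality; the paper uses them only for the isomorphism $\Phi$. Your identification of the functors is correct: $\Hom_{S\#G}(S,-)=(-)^G$ is immediate, and $\Hom_R(S,N)\cong(S\otimes_RN)^{**}$ follows, as you say, from the $(G,S)$-isomorphism $S\cong\Hom_R(S,R)$ given by the trace pairing (perfect in codimension one since the extension is \'etale there) together with the general isomorphism $\Hom_R(A,N)\cong(A^*\otimes_RN)^{**}$ for reflexive $A,N$ over a normal domain.
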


\begin{proof}
Let $S*G$ denote the twisted group algebra.
It is $\bigoplus_{g\in G}S\cdot g$ as an $S$-module (with the free basis $G$),
and the multiplication is given by $(sg)(s'g')=(s(gs'))(gg')$.
A $(G,S)$-module and an $S*G$-module are one and the same thing.
As a $(G,S)$-module, $S*G$ and $S\otimes_k kG$ are the same thing, where
$kG$ is the group algebra (the left regular representation) of $G$ over $k$.
So $\Hom_S(S*G,S)\cong S\otimes_k k[G]\cong S\otimes_k kG\cong S*G$,
where $k[G]=(kG)^*$ is the $k$-dual of $kG$ (the left regular representation).

Let us denote by $S'$ the $R$-module $S$ with the trivial $G$-module structure.
Note that $S'\rightarrow (S\otimes_k k[G])^G$ given by
$s\mapsto \sum_{g\in G}gs\otimes e_g$ is an isomorphism,
where $\{e_g\mid g\in G\}$ is the dual basis of $k[G]$, dual to
$G$, which is a basis of $kG$.
Note that $g'e_g=e_{g'g}$.

For $M\in\Ref(G,S)$, $M^G$ is certainly reflexive.
Indeed, there is a presentation
\begin{equation}\label{presentation.eq}
(S*G)^u\rightarrow (S*G)^v\rightarrow M^*\rightarrow 0.
\end{equation}
Applying $(?)^G\circ \Hom_S(?,S)$, 
\[
0\rightarrow M^G\rightarrow (S')^v\rightarrow (S')^u
\]
is exact.
As it is easy to see that $S'$ satisfies the $(S_2)$-condition 
as an $R$-module
(that is, for $P\in\Spec R$, if $\depth_{R_P}(S'_P)<2$, then
$S'_P$ is a maximal Cohen--Macaulay $R_P$-module),
so is $M^G$, and it is reflexive.

On the other hand, it is obvious that $(S\otimes_R N)^{**}$ is
a reflexive $(G,S)$-module, since it is a dual of some $S$-finite
$(G,S)$-module.

Let $u:N\rightarrow ((S\otimes_R N)^{**})^G$ be the map given by
$u(n)=\lambda(1\otimes n)$, where $\lambda:S\otimes_R N\rightarrow
(S\otimes_R N)^{**}$ is the canonical map.
We show that $u$ is an isomorphism.
To verify this, since both $N$ and $((S\otimes_R N)^{**})^G$ are reflexive,
it suffices to show that 
\[
u_P:N_P\rightarrow (((S\otimes_R N)^{**})^G)_P\cong ((S_P\otimes_{R_P}N_P)^{**})
^G
\]
is an isomorphism for $P\in\Spec R$ with $\dim R_P\leq 1$ (cf. \cite[Lemma~5.11]{LW}). 
Then $N_P$ is a free module, and we may assume that $N_P=R_P$ by additivity.
This case is trivial.

Let $\varepsilon:(S\otimes_R M^G)^{**}\rightarrow M$ be the composite
\[
(S\otimes_R M^G)^{**}\xrightarrow{a^{**}} M^{**}\xrightarrow{\lambda^{-1}}M,
\]
where $a:S\otimes_R M^G\rightarrow M$ is given by $a(s\otimes m)=sm$.
We show that $\varepsilon$ is an isomorphism.
Since $(S\otimes_R M^G)^*$ and $M$ are reflexive, it suffices to show
that $a^*: M^*\rightarrow (S\otimes_R M^G)^*$ is an isomorphism.
By the five lemma and the existence of the presentation of the form
(\ref{presentation.eq}), we may assume that $M=S\otimes_k k[G]$.
Then $a^*$ is identified with the map
\[
S*G\cong (S\otimes_k k[G])^*\xrightarrow {a^*}(S\otimes_R 
(S\otimes_k k[G])^G)^*
\cong (S\otimes_R S')^*\cong \Hom_R(S',S).
\]
It is easy to see that this map is given by $sg\mapsto (s'\mapsto s(gs'))$.
This is an isomorphism by \cite[Theorem~4.2]{IT} or \cite[Theorem~5.12]{LW}.

As $u$ and $\varepsilon$ are isomorphisms,
$M\mapsto M^G$ and $N\mapsto (S\otimes_R N)^{**}$ are quasi-inverse to each other, and
hence they are category equivalences.

The graded version is proved similarly.
\end{proof}

By using this theorem, we give the proof of Proposition\,\ref{unqness}. 

\begin{proof}[Proof of Proposition\,\ref{unqness}]
The equivalences of $(1)$ and $(2)$, $(4)$ and $(5)$ follow from 
Theorem\,\ref{almost.p.f.b}, and $(3)$ is obtained by applying $(-\otimes_Sk)$ to $(2)$.
If we forget the grading from $(4)$, then we obtain $(2)$.

\[\begin{array}{rcclll}
  (1)&\overset{\mathrm{Thm}.\ref{almost.p.f.b}}{\Longleftrightarrow}&(2)&&\overset{\otimes_Sk}{\Longrightarrow}&(3) \\
     &&\Big\Uparrow&^{\text{forget}}_{\text{grading}}&&\\
     &&(4)&&\underset{\mathrm{Thm}.\ref{almost.p.f.b}}{\Longleftrightarrow}&(5)
\end{array}\]

So we will show $(3)\Rightarrow (4)$. 
If we consider ${}^eS/\fkm{}^eS$ as a $\frac{1}{q}\ZZ$-graded $G$-module, 
then we can write
\[
{}^eS/\fkm{}^eS\cong\bigoplus^n_{i=0}\bigoplus^{c_{i,e}}_{j=1}V_i(-\alpha_{ij})
\]
for some $\alpha_{ij}\in\frac{1}{q}\ZZ_{\geq 0}$.
Then 
as in the proof of 
\cite[Proposition~3.2.1]{SVdB},
we have 
${}^eS\cong S\otimes_k({}^eS/\fkm{}^eS)$, and (4) follows.
\end{proof}

Especially,
the decomposition $(3)$ appears in Proposition\,\ref{unqness} is unique. Thus, we obtain the next statement as a corollary. 

\begin{cor}
\label{cor_unq}
Each $M_i$ is indecomposable and the multiplicities $c_{i,e}$ are determined uniquely.
\end{cor}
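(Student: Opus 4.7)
The plan is to derive both assertions directly from Proposition~\ref{unqness} and Theorem~\ref{almost.p.f.b}: uniqueness will come from the equivalence (1)$\Leftrightarrow$(3), which reduces the problem to a question about $G$-modules, while indecomposability of each $M_i$ will follow from the category equivalence $\Ref(G,S)\simeq\Ref(R)$ together with a Nakayama-type argument on the $S$-side.

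For uniqueness, I would suppose that ${}^eR$ admits two decompositions of the shape (1) in Proposition~\ref{unqness} with respective multiplicities $c_{i,e}$ and $c_{i,e}'$. Applying the implication (1)$\Rightarrow$(3) to each one yields two $G$-module isomorphisms of ${}^eS/\fkm{}^eS$ with $\bigoplus_i V_i^{\oplus c_{i,e}}$ and $\bigoplus_i V_i^{\oplus c_{i,e}'}$. Since the $V_i$ are pairwise non-isomorphic irreducible $G$-modules and $|G|$ is coprime to $p=\charac k$, Maschke's theorem together with Schur's lemma gives $c_{i,e}=\dim_k\Hom_G(V_i,\,{}^eS/\fkm{}^eS)$, and hence $c_{i,e}=c_{i,e}'$.

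For indecomposability of $M_i$, I would start from a hypothetical decomposition $M_i\cong N_1\oplus N_2$ as $R$-modules. Since $M_i=(S\otimes_k V_i)^G$ is reflexive and reflexivity is inherited by direct summands, both $N_j$ lie in $\Ref(R)$. The quasi-inverse $N\mapsto(S\otimes_R N)^{**}$ from Theorem~\ref{almost.p.f.b} then translates this into a decomposition $S\otimes_k V_i\cong P_1\oplus P_2$ in $\Ref(G,S)$, where $P_j=(S\otimes_R N_j)^{**}$. Applying the additive functor $-\otimes_S k$ gives
\[
V_i\;\cong\;(P_1/\fkm P_1)\oplus(P_2/\fkm P_2)
\]
as $G$-modules, and because $V_i$ is irreducible one of these two quotients must vanish. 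Graded Nakayama's lemma, applied to the finitely generated $S$-module $P_j$, then forces the corresponding $P_j$, and therefore $N_j$, to be zero; hence $M_i$ is indecomposable.

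Neither step is expected to pose a serious obstacle, since the substantive work has already been carried out in Proposition~\ref{unqness} and Theorem~\ref{almost.p.f.b}. The only point requiring mild care is observing that reflexivity is inherited by direct summands, so that an \emph{a priori} arbitrary $R$-module decomposition of $M_i$ may legitimately be transported along the equivalence of Theorem~\ref{almost.p.f.b}.
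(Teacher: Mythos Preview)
Your argument is correct and matches the paper's approach: the paper simply observes that the decomposition in~(3) of Proposition~\ref{unqness} is unique (by semisimplicity of $kG$), and the corollary follows via the equivalences established there and in Theorem~\ref{almost.p.f.b}. Your write-up spells out exactly this reasoning.

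One small point deserves tightening. In your indecomposability argument you invoke \emph{graded} Nakayama for $P_j=(S\otimes_R N_j)^{**}$, but the hypothetical splitting $M_i\cong N_1\oplus N_2$ is as ungraded $R$-modules, so the $N_j$ and hence the $P_j$ carry no grading a~priori. The conclusion you want still holds: from $P_j/\fkm P_j=0$ ordinary Nakayama at the maximal ideal $\fkm$ gives $(P_j)_\fkm=0$, and since $P_j$ is reflexive over the domain $S$ it is torsion-free, so the localization map $P_j\to(P_j)_\fkm$ is injective and $P_j=0$. Alternatively, one may compute that $\End_R(M_i)\cong\End_{(G,S)}(S\otimes_kV_i)=(S\otimes_k\End_k(V_i))^G$ is a non-negatively graded $k$-algebra whose degree-zero piece is $\End_G(V_i)=k$ by Schur, hence is local; this also yields indecomposability without Nakayama.
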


In Proposition\,\ref{unqness} and Corollary\,\ref{cor_unq}, the condition ``$G$ contains no pseudo-reflections'' is essential.
If $G$ contains a pseudo-reflection, then there is a counter-example as follows.

\begin{ex}
Let $S=k[x,y]$ be a polynomial ring, where $(\charac\,k,|G|)=1$.
Set $G=\big<\sigma=
    \begin{pmatrix} 0&1 \\
                    1&0 
    \end{pmatrix}          \big>$, that is $G$ is a symmetric group $\fkS_2$, 
and, $V_0=k, V_1=\operatorname{sgn}$ are irreducible representations of $G$. (Note that $\sigma$ is a pseudo-reflection.) 
Then, $R\coloneqq S^G\cong k[x+y, xy]$. Since $R$ is a polynomial ring, ${}^eR\cong R^{\oplus p^{2e}}$.
On the other hand, 
\[
M_1\coloneqq (S\otimes_kV_1)^G=\{f\in S \mid \sigma\cdot f=(\operatorname{sgn}\;\sigma)f\}=(x-y)R\cong R.
\]
So ${}^eR$ also decompose as ${}^eR\cong M_1^{\oplus p^{2e}}$. Therefore, the uniqueness doesn't hold in this case. 
\end{ex}

\section{Generalized $F$-signature of invariant subrings}
\label{GF-sig}

In this section, we show the existence of the limit and determine it. 

In our case, the invariant subring $R$ has FFRT. 
Thus, the existence of the limit $\displaystyle\lim_{e\rightarrow\infty}\frac{c_{i,e}}{p^{de}}$ is guaranteed by the next proposition. So we can define this limit. 

\begin{prop} $($\cite[Proposition~3.3.1]{SVdB}, \cite[Theorem~3.11]{Yao}$)$ 
\label{unq_FFRT}
If $R$ has FFRT, then
for $i=0,1,\cdots,n$, the limit $\displaystyle\lim_{e\rightarrow\infty}\frac{c_{i,e}}{p^{de}}$ exists.
\end{prop}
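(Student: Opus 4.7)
The plan is to encode the Frobenius pullback of every module in $\calN$ by a single nonnegative integer matrix and then to read off convergence of $c_{i,e}/p^{de}$ from the spectral behaviour of that matrix, supplemented by convergence of auxiliary $F$-signature invariants.

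First I would observe that, since $R$ has FFRT by $\calN=\{M_0,\ldots,M_n\}$, each ${}^1M_j$ is again a finite direct sum of the $M_i$'s; Krull--Schmidt (applied after passing to $\widehat R$ if necessary, so that $\mathrm{End}_R(M_i)$ is local) gives unique nonnegative integers $a_{ij}$ with ${}^1M_j\cong\bigoplus_iM_i^{\oplus a_{ij}}$. Writing $A=(a_{ij})$ and $\mathbf c_e=(c_{0,e},\ldots,c_{n,e})^T$, the identity ${}^{e+1}R={}^1({}^eR)$ yields the recursion $\mathbf c_{e+1}=A\mathbf c_e$, and hence $\mathbf c_e=A^e\mathbf e_0$. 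Taking generic ranks in ${}^1M_j\cong\bigoplus_iM_i^{a_{ij}}$ shows $A^{T}\mathbf r=p^d\mathbf r$ for $\mathbf r=(\rank M_i)$, so $p^d$ is an eigenvalue of $A$; since moreover $(A^e)_{ij}\le p^{de}\rank(M_j)/\rank(M_i)$, every entry of $p^{-de}A^e$ is bounded, the spectral radius of $A$ equals $p^d$, and $p^{-de}\mathbf c_e$ stays in a compact region.

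To turn this boundedness into actual convergence I would pair $\mathbf c_e$ with enough convergent numerical invariants. Let $\phi_j(e')$ denote the multiplicity of $R$ in the decomposition of ${}^{e'}M_j$; existence of $F$-signatures of modules (Tucker's theorem and its module-valued refinements by Yao) supplies $\phi_j(e')/p^{de'}\to s(M_j)$. Counting free summands on both sides of ${}^{e+e'}R\cong\bigoplus_i{}^{e'}M_i^{\oplus c_{i,e}}$ then gives
\[
\frac{c_{0,e+e'}}{p^{d(e+e')}}=\sum_i\frac{c_{i,e}}{p^{de}}\cdot\frac{\phi_i(e')}{p^{de'}},
\]
and letting $e'\to\infty$ with $e$ fixed produces a single linear relation among the accumulation points of $p^{-de}\mathbf c_e$. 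Iterating with each $M_j$ in place of $R$ (counting $R$-summands in ${}^{e+e'}M_j$) yields $n+1$ such relations whose coefficient matrix is a matrix of generalised $F$-signatures of pairs of the $M_i$; invertibility of this matrix, which would follow from linear independence of $[M_0],\ldots,[M_n]$ in the appropriate Grothendieck-type group, forces the accumulation point of $p^{-de}\mathbf c_e$ to be unique, hence to be a genuine limit.

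The principal obstacle is precisely this invertibility step: one must verify that the generalised $F$-signatures genuinely distinguish the classes $[M_i]$. A sleeker Perron--Frobenius attack applied directly to $A/p^d$ recasts the same difficulty as a primitivity statement, namely that $p^d$ is a strictly dominant and \emph{simple} eigenvalue of $A$, and is no easier to establish in full FFRT generality.
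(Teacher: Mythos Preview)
The paper does not give its own proof of this proposition: it is quoted as a known result, with the attribution \cite[Proposition~3.3.1]{SVdB} (under the extra hypothesis of strong $F$-regularity) and \cite[Theorem~3.11]{Yao} (in general), together with a remark on the history. So there is nothing in the paper to compare your argument against; your sketch should be measured against those references.

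Your matrix set-up is exactly the framework of \cite{SVdB}: the recursion $\mathbf c_{e+1}=A\mathbf c_e$, the rank eigenvector $A^{T}\mathbf r=p^d\mathbf r$, and the entrywise bound giving spectral radius $p^d$ are all correct and standard. The difficulty, as you say yourself, is the passage from boundedness to convergence.

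Your proposed route through $F$-signatures of modules has a genuine gap beyond the one you flag. Counting free summands in ${}^{e+e'}R$ and letting $e'\to\infty$ gives the single relation $s(R)=\sum_i (c_{i,e}/p^{de})\,s(M_i)$, valid for every $e$; this is one linear constraint on any accumulation point, not $n+1$. Your ``iterating with each $M_j$ in place of $R$'' does not help: if you decompose ${}^{e+e'}M_j$ you obtain relations among the $d^{\,j}_{i,e}$, not among the $c_{i,e}$; and if instead you count $M_j$-summands (rather than free summands) in ${}^{e+e'}R$, the coefficients you need are precisely the limits $s(M_i,M_j)$ whose existence is what is at stake, so the argument is circular. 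In the strongly $F$-regular case treated in \cite{SVdB} one argues differently: every $M_i$ eventually occurs in ${}^eR$, so $A$ (or a power) has strictly positive entries, Perron--Frobenius applies directly, $p^d$ is a simple strictly dominant eigenvalue, and $p^{-de}A^e$ converges. Removing strong $F$-regularity, as in \cite{Yao}, requires a more delicate analysis than the pairing you outline. Since the ring $R$ in this paper is strongly $F$-regular, the \cite{SVdB} version already suffices for the application here.
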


\begin{rem}
In \cite{SVdB}, this proposition is proved under the assumption ``$R$ is strongly $F$-regular and has FFRT". 
After that, Y.~Yao showed the condition of strongly $F$-regular is unnecessary \cite{Yao}. 
Note that the existence of the limit for free direct summands $($i.e. $F$-signature$)$ is proved under more general settings as we showed before.
\end{rem}

\begin{dfn}
We call this limit the generalized $F$-signature of $M_i$ with respect to $R$ and
denote it by 
\[
s(R,M_i)\coloneqq\displaystyle\lim_{e\rightarrow\infty}\frac{c_{i,e}}{p^{de}}.
\]
\end{dfn}

The main theorem in this paper is the following. 

\begin{thm}[Main theorem]
\label{main}
Let the notation be as above. Then for all $i=0,\cdots,n$ one has
\[
 s(R,M_i)=\frac{\dim_kV_i}{|G|}=\frac{\rank_RM_i}{|G|}.
\]
\end{thm}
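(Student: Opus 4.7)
The plan is to identify $c_{i,e}$ with a multiplicity in the finite-dimensional $kG$-module $S/\fkm^{[q]}S$, compute that multiplicity by $G$-character theory applied to the Koszul resolution on $x_1^q,\ldots,x_d^q$, and extract the asymptotic using the no-pseudo-reflection hypothesis. The first step is to translate Proposition~\ref{unqness}(3) into the ``untwisted'' setting: since Frobenius twist $W\mapsto{}^eW$ is a self-equivalence of $kG\text{-}\Mod$ that permutes the simple modules via a permutation $\tau_e$ (so ${}^eV_j\cong V_{\tau_e(j)}$) and preserves $k$-dimensions, writing $S/\fkm^{[q]}S\cong\bigoplus V_j^{\oplus b_{j,e}}$ gives $c_{i,e}=b_{\tau_e^{-1}(i),e}$; since $\dim V_{\tau_e^{-1}(i)}=\dim V_i$, it suffices to prove $\lim_{e\to\infty}b_{j,e}/p^{de}=(\dim_k V_j)/|G|$ for each fixed $j$.

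For the key character computation, choose a basis of $V^*$ diagonalizing a given $g\in G$, so $g$ acts on $V^*$ with eigenvalues $\mu_1,\ldots,\mu_d$ (roots of unity of order prime to $p$). The sequence $x_1^q,\ldots,x_d^q$ is regular in $S$ and its Koszul complex is a $G$-equivariant graded free resolution of $S/\fkm^{[q]}S$; taking the Euler characteristic of graded characters yields
\[
\chi_{S/\fkm^{[q]}S}(g) = \prod_{i=1}^d \bigl(1 + \mu_i + \mu_i^2 + \cdots + \mu_i^{q-1}\bigr).
\]
Since $(|G|,p)=1$, the $\mu_i$ lift uniquely to roots of unity of the same order in characteristic zero (the Brauer lift), so the character-orthogonality formula
\[
b_{j,e} = \frac{1}{|G|}\sum_{g\in G}\chi_{V_j^*}(g)\,\chi_{S/\fkm^{[q]}S}(g)
\]
is valid as an equality of integers.

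The no-pseudo-reflection hypothesis enters next. Factors with $\mu_i=1$ contribute $q$ and there are $r(g)\coloneqq\dim_k V^g$ such factors, while each factor with $\mu_i\ne1$ is a sum of $q$ bounded roots of unity and is therefore bounded uniformly in $e$; hence $\chi_{S/\fkm^{[q]}S}(1)=q^d$ and $\chi_{S/\fkm^{[q]}S}(g)=O(q^{r(g)})$ in general. The hypothesis forces $r(g)\le d-2$ for every $g\ne1$ (the case $r(g)=d-1$ being precisely pseudo-reflection), so $\chi_{S/\fkm^{[q]}S}(g)=O(q^{d-2})$ for $g\ne1$. Dividing the orthogonality formula by $p^{de}=q^d$ and passing to the limit, only the $g=1$ term survives and yields $\lim b_{j,e}/p^{de}=\chi_{V_j^*}(1)/|G|=(\dim_k V_j)/|G|$, hence $s(R,M_i)=(\dim_k V_i)/|G|$. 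The equality $\rank_R M_i=\dim_k V_i$ is standard: $\operatorname{Frac}(S)/\operatorname{Frac}(R)$ is a $G$-Galois extension, hence $\operatorname{Frac}(S)\cong\operatorname{Frac}(R)G\cong\bigoplus V_j^{\oplus\dim V_j}$ as $\operatorname{Frac}(R)G$-modules, so $M_i\otimes_R\operatorname{Frac}(R)=(\operatorname{Frac}(S)\otimes V_i)^G$ has $\operatorname{Frac}(R)$-dimension $\dim_k V_i$.

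The two delicate points of the plan are the bookkeeping with the permutation $\tau_e$ (ultimately harmless because the Frobenius twist preserves $\dim V_j$) and the appeal to Brauer lifting to validate integer character orthogonality in positive characteristic; once these are in place, the Koszul character computation and the dichotomy on $r(g)$ provided by the no-pseudo-reflection assumption cleanly produce the asymptotic.
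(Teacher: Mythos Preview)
Your argument is correct and follows the same overall strategy as the paper: identify $c_{i,e}$ with the multiplicity of $V_i$ in the $kG$-module ${}^eS/\fkm\,{}^eS$, compute the Brauer character $\chi_{S/\fkm^{[q]}}(g)=\prod_i(1+\mu_i+\cdots+\mu_i^{q-1})$, and show that only the $g=1$ term contributes in the limit. Three points of execution differ. First, to handle the Frobenius twist you track the permutation $\tau_e$ of simples and use that twist preserves $\dim_kV_j$ (together with the finiteness of the index set for uniformity); the paper instead passes to a subsequence $e=e_0t$ along which the twist becomes trivial, by choosing $e_0$ so that $\FF_{p^{e_0}}G$ already splits as a product of matrix rings. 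Second, you derive the product formula via the Koszul Euler characteristic, while the paper simply writes down the monomial eigenbasis $\{x_1^{\lambda_1}\cdots x_d^{\lambda_d}:0\le\lambda_i<q\}$ of $S/\fkm^{[q]}$, which is more direct. Third, and worth noting, you invoke the no-pseudo-reflection hypothesis to obtain $r(g)\le d-2$ for $g\neq1$; the paper shows this is unnecessary at this step, since $g\neq1$ alone guarantees some $\mu_i\neq1$, hence $|\chi_{S/\fkm^{[q]}}(g)|\le q^{d-1}\cdot\dfrac{2}{|1-\mu_i|}=o(q^d)$. The no-pseudo-reflection hypothesis is needed only for Proposition~\ref{unqness} (the well-definedness of the $c_{i,e}$), which you are already citing.
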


\begin{rem}
The second equation follows from $\dim_kV_i=\rank_RM_i$ clearly. 

The case that $i=0$ is due to \cite[Theorem~4.2]{WY}, as we have seen before $($Theorem\,\ref{Fsig_inv}$)$.
And a similar result holds for finite subgroup scheme of $\SL_2$ \cite[Lemma~4.10]{HS}. 
\end{rem}

\begin{rem}
From this theorem, we can see that each indecomposable MCM $R$-modules in the finite set $\{R, M_1,\cdots, M_n\}$ 
actually appear in ${}^eR$ as a direct summand for sufficiently large $e$ $($see also \cite[Proposition~2.5]{TY}$)$.
\end{rem}

\medskip

In order to prove this theorem, we introduce the notion of the Brauer character. 
In the representation theory of finite groups over $\CC$, the character gives us very effective method to distinguish each representation. 
But now, we are in a positive characteristic field $k$, not in $\CC$. So the character in the original sense doesn't work well. 
Therefore we have to modify it for applying to our context. 
For this purpose, we introduce the Brauer character $($for more details, refer to some textbooks e.g. \cite{CR}, \cite{Wei}$)$. 

\bigskip

As we assume that $m:=|G|$ is not divisible by $p$, there is a primitive $m$-th root of unity in $k$,
and thus both $\mu_m(k)=\{\omega\in k^\times\mid \omega^m=1\}$ and
$\mu_m(\CC)=\{\omega\in\CC^\times\mid \omega^m=1\}$ are the cyclic groups of 
order $m$.
Fix a group isomorphism $\Phi:\mu_m(k)\rightarrow \mu_m(\CC)$.

\begin{dfn}
For a $kG$-module $V$, the Brauer character $\chi_V$ of $V$ is the function $\chi_V: G\rightarrow \CC$ given by 
\[
 \chi_V(g)\coloneqq \sum^d_{i=1}\Phi(\omega_i) \in\CC \quad (g\in G), 
\]
where $\omega_1, \cdots, \omega_d$ are the eigenvalues of $g$. 
\end{dfn}

\medskip

The following proposition is well-known for the original character over $\CC$. 
And this kind of formula also holds for the Brauer character. 

\begin{prop}
\label{character prop}
Let $V,W$ be $kG$-modules and $g\in G$, then
\begin{enumerate}
  \item[(1)] $\chi_{V\otimes W}(g)=\chi_V(g)\cdot \chi_W(g)$.
  \item[(2)] $\chi_{V\oplus W}(g)=\chi_V(g)+\chi_W(g)$.
  \item[(3)] $\chi_{V^*}(g)=\overline{\chi_V(g)}$, where the bar denotes the conjugate of a complex number.
  \item[(4)] $\chi_V(1_G)=\dim_k V$.
  \item[(5)] $\dim_k V^G=\displaystyle\frac{1}{|G|}\sum_{g\in G}\chi_V(g)$.
  \item[(6)] $\dim_k\Hom_G(V,W)=\displaystyle\frac{1}{|G|}\sum_{g\in G}\overline{\chi_V(g)}\cdot\chi_W(g)$.
 \end{enumerate}
\end{prop}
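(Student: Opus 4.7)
The plan is to verify each identity by reducing to an eigenvalue calculation, exploiting the fact that since $|G|$ is invertible in $k$, every $g \in G$ is semisimple on any finite-dimensional $kG$-module: its minimal polynomial divides the separable polynomial $X^{|G|}-1$. Hence I can always diagonalize $g$ and work with eigenvalues $\omega_1(g),\ldots,\omega_d(g) \in \mu_m(k)$, which $\Phi$ transfers to roots of unity in $\CC$ of the same order.

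I would dispatch (2) and (4) first, as both are immediate: the identity $1_G$ acts with all eigenvalues equal to $1$, and the eigenvalue multiset of $g$ on $V \oplus W$ is the disjoint union of those on $V$ and $W$. For (1), I would choose eigenbases of $g$ on $V$ and $W$ separately; the simple tensors of these bases diagonalize $g$ on $V \otimes W$ with eigenvalues $\omega_i(g)\eta_j(g)$, so the multiplicativity of the group homomorphism $\Phi$ splits the double sum into a product. For (3), dualizing an eigenbasis inverts the eigenvalues; then $\Phi(\omega^{-1}) = \Phi(\omega)^{-1} = \overline{\Phi(\omega)}$, the last equality because $\Phi(\omega) \in \mu_m(\CC)$ has absolute value $1$.

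Item (5) is the crux. The natural object is the averaging element $\pi := \frac{1}{|G|}\sum_{g \in G} g \in kG$, a well-defined idempotent whose image on $V$ is exactly $V^G$, so its $k$-linear trace equals $\dim_k V^G$ viewed in $k$. This already gives the identity modulo $p$, but the assertion is an equality of complex numbers. To bridge this gap, I would decompose $V = \bigoplus_i V_i^{\oplus n_i}$ into irreducibles by Maschke's theorem, use additivity from (2) to reduce to the case of an irreducible, and invoke the orthogonality relations for Brauer characters from \cite[\S 17]{CR} or \cite[Ch.\,5]{Wei}; these show that $\frac{1}{|G|}\sum_g \chi_{V_i}(g)$ equals the multiplicity of the trivial representation inside $V_i$. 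Summing against the decomposition recovers $n_0 = \dim_k V^G$ as a complex number. Finally, (6) follows from the canonical identification $\Hom_G(V,W) = \Hom_k(V,W)^G = (V^*\otimes_k W)^G$ as $k$-vector spaces, together with parts (1), (3), and (5).

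The main obstacle is part (5): the trace-of-idempotent argument only yields the identity in $k$, and upgrading it to an equation in $\CC$ genuinely requires Brauer orthogonality, which in turn rests on a lifting of modular characters to characteristic zero. Rather than redeveloping this machinery, the cleanest presentation is to quote it from the textbooks already referenced, and to spend the written proof mostly on (1)--(4) and on the reduction steps connecting (5) and (6) to the orthogonality statement.
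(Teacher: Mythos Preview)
Your treatment of (1)--(4) and your reduction of (6) to (1), (3), (5) match the paper exactly; the paper likewise dismisses these as following ``easily from the definition.'' The divergence is in (5).

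For (5) you correctly identify the difficulty---the averaging-idempotent computation is only a trace identity in $k$, not in $\CC$---and you resolve it by decomposing into irreducibles and quoting the first orthogonality relation for Brauer characters from \cite{CR} or \cite{Wei}. This is valid in the present non-modular setting ($p\nmid |G|$, so every element is $p$-regular and the orthogonality relations hold over all of $G$). The paper instead gives a self-contained proof: it lifts $V$ to a projective $RG$-module $V_R$ over the Witt vectors $R$ of $k$, observes that the trace of $g$ on $V_R$ (computed in the subring $\ZZ[\omega]\subset R$, viewed inside $\CC$) \emph{is} the Brauer character $\chi_V(g)$, and then computes the trace of the idempotent $\gamma=\frac{1}{|G|}\sum_g g$ on the free $R$-module $V_R$ as $\rank_R V_R^G=\dim_k V^G$. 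In effect the paper carries out, in miniature, exactly the lifting-to-characteristic-zero that underlies the orthogonality relations you cite; your approach outsources that step while the paper performs it explicitly. Your route is shorter but relies on heavier external input; the paper's is longer but self-contained, and along the way it also observes that (5) is independent of the choice of the isomorphism $\Phi$.
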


\begin{proof}
The statements (1)--(4) follow easily from the definition.
(6) follows from (1), (3), and (5).
So we only prove (5).
If we show (5) for a particular choice of $\Phi$, then (5) is true for arbitrary choice, say $\Phi'$, because
we can write $\Phi'=\alpha\circ \Phi$, where $\alpha$ is some automorphism of $\mathbb Q(\omega)$ over $\mathbb Q$, 
where $\omega$ is a primitive $m$-th root of unity in $\CC$.
Let $R$ be the ring of Witt vectors over $k$.
Note that $R$ is a complete DVR (discrete valuation ring).
Let $t$ be its uniformizing parameter.
We identify $R/tR$ with $k$.
Let $\bar \omega$ be a fixed primitive $m$-th root of unity in $k$.
By Hensel's lemma, it is easy to see that $\bar \omega$ lifts to a primitive $m$-th root of unity in $R$ uniquely, say to $\omega$.
Note that $V$ is a $kG$-module, and hence is an $RG$-module.
Let $V_R\rightarrow V$ be the projective cover as an $RG$-module, which exists (note that $RG$ is semiperfect).
Note that $V_R/tV_R=V$, and $V_R$ is an $R$-free module of rank $\dim_k V$.

Let $R_0=\ZZ[\omega]$ be the subring of $R$ generated by $\omega$.
Then regarding 
$R_0$ as a subring of $\CC$, we have that $\tilde \chi_V$ is a Brauer character of $V$,
where $\tilde \chi_V(g)=\trace_{V_R}(g)$ (the trace makes sense, since $V_R$ is a finite free $R$-module).
Let $\gamma=\displaystyle\frac{1}{|G|}\sum_{g\in G}g\in RG$.
Then it is easy to see that $\gamma$ is a projector from any $RG$-module $M$ to $M^G$.
In particular, the $G$-invariance $(?)^G$ is an exact functor on the category of $RG$-modules.
It follows that $V^G=(V_R/tV_R)^G\cong V_R^G/tV_R^G=k\otimes_R V_R^G$.
Let $U:=(1-\gamma)V_R$.
Then $V_R=V_R^G\oplus U$, and $\gamma$ is the identity map on $V_R^G$ and zero on $U$.
So $\displaystyle\frac{1}{|G|}\sum_{g\in G}\tilde \chi_V(g)=\trace_{V_R}(\gamma)=\rank_R V_R^G=\dim_k V^G$.
This is what we wanted to prove.
\end{proof}

So we are now ready to prove the main theorem.

\begin{proof}[Proof of Theorem\,\ref{main}]
Firstly, there is $e_0\ge 1$ such that the group ring $\FF_{q_0}G$ is isomorphic to the direct product of total matrix rings over $\FF_{q_0}$, 
where $q_0=p^{e_0}$. Namely, 
\[
 \FF_{q_0}G\cong\operatorname{Mat}_{r_1}(\FF_{q_0})\times\cdots\times\operatorname{Mat}_{r_m}(\FF_{q_0}),\quad (r_1,\cdots,r_m\in\NN).
\]
Since the component of matrix representation of Frobenius twist is $p^{-e}$-th power of the original one, 
so if we take an appropriate basis, then any component of matrix representation is in the finite field $\FF_{q_0}$.   
Thus, if $e=e_0t$, then we can consider ${}^eM\cong M$ for any $G$-module $M$.

Since we know the existence of the limit, it suffices to show the subsequence $\{\frac{c_{i, e_0t}}{p^{de_0t}}\}_{t\in\NN}$ converges on 
$(\dim_kV_i)/|G|$. So we prove 
\[
\lim_{t\rightarrow\infty}\frac{c_{i, e_0t}}{p^{de_0t}}=\frac{\dim_kV_i}{|G|}.
\]

For $e=e_0t$, we obtain ${}^eS/\fkm{}^eS\cong {}^e(S/\fkm^{[q]})\cong S/\fkm^{[q]}$. 
And ${}^eS/\fkm{}^eS$ is also isomorphic to the finite direct sum of irreducible representations (cf.\;Proposition\,\ref{unqness}).
By Proposition\,\ref{character prop}\;(6), the multiplicity $c_{i,e}$ is described as follows.
\[
 c_{i,e}=\dim_k\Hom_G(V_i,S/\fkm^{[q]})=
 \frac{1}{|G|}\sum_{g\in G}\overline{\chi_{V_i}(g)}\cdot\chi_{S/\fkm^{[q]}}(g).
\]

Set $g\in G$ and suppose that the order of $g$ is $m$. 
Then there is a basis $\{x_1, \cdots, x_d\}$ of $V$ such that each $x_i$ is an eigenvector of $g$ and 
we can write $g\cdot x_i=\omega_i x_i$ with $\omega_i=\omega^{\delta_i}$ for some $0\leq \delta_i<m$, 
where $\omega$ is a primitive $m$-th root of unity. In this situation
\[
\{x_1^{\lambda_1}\cdots x_d^{\lambda_d}\mid 0\leq \lambda_1,\ldots,\lambda_d
<q\}\subset
\displaystyle\bigoplus_{l=0}^{(q-1)d}\Sym_l V
\]
is a basis of $S/\mathfrak m^{[q]}$. 
As each $x_1^{\lambda_1}\cdots x_d^{\lambda_d}$ is an eigenvector of $g$
with the eigenvalue $\omega_1^{\lambda_1}\cdots \omega_d^{\lambda_d}$, we have 
\[
\chi_{S/\mathfrak m^{[q]}}(g)
=
\sum_{0\leq \lambda_1,\ldots,\lambda_d<q}
\Phi(\omega_1^{\lambda_1}\cdots\omega_d^{\lambda_d})
=\prod_{i=1}^d(1+\theta_i+\cdots+\theta_i^{q-1}), 
\]
where $\theta_i\coloneqq\Phi(\omega_i)$.

\begin{enumerate}
  \item[(i)] In case $g=1$, by Proposition\,\ref{character prop}\,(4), 
  \[
   \frac{\overline{\chi_{V_i}(g)}\cdot\chi_{S/\fkm^{[q]}}(g)}{q^d}
  =\frac{\dim_kV_i\cdot q^d}{q^d}=\dim_kV_i.
  \]
 \end{enumerate}

\begin{enumerate}
  \item[(ii)] In case $g\neq 1$, we may assume $\theta_d\neq 1$. Then
  
   \[\Bigg|\; \frac{\overline{\chi_{V_i}(g)}\cdot\chi_{S/\fkm^{[q]}}(g)}{q^d}\;\Bigg|
   \le\; \frac{\big|\,\overline{\chi_{V_i}(g)}\,\big|}{q^d}\prod^{d-1}_{i=1}(|1|+|\theta_i|+\cdots+|\theta_i|^{q-1})\cdot\Bigg|   \frac{1-\theta_d^q}{1-\theta_d}\Bigg|\] \\
   \[\le\frac{\dim_kV_i}{q}\cdot\frac{2}{|1-\theta_d|}\xrightarrow{t\rightarrow\infty} 0. \]  
 \end{enumerate}
The first inequation is obtained by applying the triangle inequality. Since $|\theta_i|\le 1$, we can obtain the second inequation. 

From previous arguments, we may only discuss in case $g=1$. Thus, we conclude
\[
\lim_{e\rightarrow\infty}\frac{c_{i, e}}{q^d}
=\lim_{e\rightarrow\infty}\frac{1}{q^d}\cdot\frac{1}{|G|}\sum_{g\in G}\overline{\chi_{V_i}(g)}\cdot\chi_{S/\fkm^{[q]}}(g)
=\frac{\dim_kV_i}{|G|}.
\]
\end{proof}

   \bigskip
   
Next, we consider the decomposition of  ${}^eM_i$. Since each MCM $R$-module $M_i$ appears in ${}^{e^\prime} R$  
for sufficiently large $e^\prime \gg 0$ as a direct summand, 
it also decomposes as 
\[
{}^eM_i\cong M_0^{\oplus d_{0,e}^i}\oplus M_1^{\oplus d_{1,e}^i}\oplus\cdots\oplus M_n^{\oplus d_{n,e}^i}. 
\]
In this situation, we define the limit 
\[
s(M_i,M_j)\coloneqq\displaystyle\lim_{e\rightarrow\infty}\frac{d_{j,e}^i}{p^{de}}
\]  
and call it the generalized $F$-signature of $M_j$ with respect to $M_i$. 
The next corollary immediately follows from Theorem~\ref{main} and \cite[Proposition~3.3.1, Lemma~3.3.2]{SVdB}.   

\begin{cor}
Let the notation be as above. 
Then for all $i,j=0,\cdots,n$ one has 
\[
s(M_i,M_j)=(\dim_kV_i)\cdot s(R,M_j)=\frac{(\dim_kV_i)\cdot(\dim_kV_j)}{|G|}
=\frac{(\rank_RM_i)\cdot(\rank_RM_j)}{|G|}.
\]
\end{cor}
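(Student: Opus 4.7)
The plan is to reduce the corollary to Theorem~\ref{main} via a scaling formula for generalized $F$-signatures. The second equality is immediate from Theorem~\ref{main}, and the third reduces to the identity $\rank_R M_i = \dim_k V_i$. This identity I would verify by applying the functor $(S \otimes_k -)^G$ to the regular representation decomposition $kG \cong \bigoplus_i V_i^{\oplus \dim_k V_i}$, which yields $S \cong \bigoplus_i M_i^{\oplus \dim_k V_i}$ as $R$-modules; comparing ranks via $\rank_R S = |G|$ and $\sum_i(\dim_k V_i)^2 = |G|$ then forces $\rank_R M_i = \dim_k V_i$.

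The essential content is therefore the first equality $s(M_i,M_j) = (\dim_k V_i)\cdot s(R,M_j)$. By Theorem~\ref{main}, each $M_i$ is a direct summand of ${}^{e_0}R$ for some $e_0$, so $M_i$ itself has FFRT by $\{M_0,\ldots,M_n\}$, and Proposition~\ref{unq_FFRT} ensures that $s(M_i, M_j)$ exists. The scaling property \cite[Lemma~3.3.2]{SVdB} then provides $s(N,M_j) = \rank_R N \cdot s(R,M_j)$ for any such $N$; applying it with $N = M_i$ completes the proof.

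Should one prefer a self-contained argument bypassing the SVdB lemma, the character method from the proof of Theorem~\ref{main} adapts essentially verbatim. The key observation is that $(-)^G$ commutes with the Frobenius pullback (since the $G$-action commutes with Frobenius), hence ${}^e M_i \cong ({}^e S \otimes_k V_i)^G$ as $R$-modules. The same decomposition of ${}^e S$ used in the proof of Theorem~\ref{main}, combined with Proposition~\ref{unqness}, then gives
\[
d_{j,e}^i = \dim_k \Hom_G\bigl(V_j,\,(S/\fkm^{[q]}) \otimes_k V_i\bigr),
\]
and Proposition~\ref{character prop}\,(6) turns this into a character sum. The $g = 1$ term contributes $(\dim_k V_i)(\dim_k V_j)\,q^d/|G|$, while the $g \neq 1$ terms remain $o(q^d)$ by exactly the bounds used in Theorem~\ref{main}, multiplied by the uniformly bounded factor $\chi_{V_i}(g)$. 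The main obstacle is securing the first equality; everything else is routine from the main theorem and the rank computation above.
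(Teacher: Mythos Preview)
Your main approach is exactly the paper's: deduce the first equality from Theorem~\ref{main} together with \cite[Proposition~3.3.1, Lemma~3.3.2]{SVdB}, and read off the rest. The alternative character computation you sketch is a pleasant bonus and would also work.

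One small gap: your argument for $\rank_R M_i = \dim_k V_i$ does not go through as written. From $S \cong \bigoplus_i M_i^{\oplus \dim_k V_i}$ you only get the single relation $\sum_i (\dim_k V_i)\,\rank_R M_i = |G| = \sum_i (\dim_k V_i)^2$, which does not determine the individual ranks. The clean way is to localize at the generic point: $\operatorname{Frac}(S)/\operatorname{Frac}(R)$ is Galois with group $G$, so by Galois descent (or the normal basis theorem) $(\operatorname{Frac}(S)\otimes_k V_i)^G$ has $\operatorname{Frac}(R)$-dimension $\dim_k V_i$. The paper simply takes this identity as known.
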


\bigskip

\subsection*{Acknowledgements}
The authors would like to thank Professor Kei-ichi~Watanabe for valuable advice and also thank Professor Peter~Symonds for 
reading the previous version of this manuscript and pointing out that the Brauer character is more suitable to prove the main theorem.


\end{document}